\newtheorem{lemma}{Lemma}
\newtheorem{theorem}{Theorem}
\newtheorem{proposition}{Proposition}
\begin{document}

\title[On energy functionals for elliptic systems]
{On energy functionals\\for second-order elliptic systems\\with constant
coefficients}

\author{A.~Bagapsh, K.~Fedorovskiy}

\address{Astamur Bagapsh${}^{1,2}$:\newline %
\hphantom{iii} ${1)}$~Federal Research Center \newline %
\hphantom{iii} `Computer Science and Control' \newline %
\hphantom{iii} of the Russian Academy of Sciences,\newline %
\hphantom{iii} Moscow, Russia;\newline %
\hphantom{iii} ${2)}$~Moscow Center for Fundamental \newline %
\hphantom{iii} and Applied Mathematics, \newline %
\hphantom{iii} Lomonosov Moscow State University, \newline %
\hphantom{iii} Moscow, Russia}

\email{a.bagapsh@gmail.com}

\address{Konstantint Fedorovskiy${}^{1,2}$:\newline %
\hphantom{iii} ${1)}$~Faculty of Mechanics and Mathematics, \newline %
\hphantom{iii} Lomonosov Moscow State University, \newline %
\hphantom{iii} Moscow, Russia; \newline %
\hphantom{iii} ${2)}$~Moscow Center for Fundamental \newline %
\hphantom{iii} and Applied Mathematics, \newline %
\hphantom{iii} Lomonosov Moscow State University, \newline %
\hphantom{iii} Moscow, Russia}

\email{kfedorovs@yandex.ru}

\thanks{\rm The work is partially supported by the Theoretical Physics and
Mathematics Advancement Foundation `BASIS'}

\maketitle
{
\small
\begin{quote}
\noindent{\bf Abstract. } We consider the Dirichlet problem for second-order
elliptic systems with constant coefficients. We prove that non-reducible
strongly elliptic systems of this type do not admits non-negatively defined
energy functionals of the form
$$
f\mapsto\int_{D}\varPhi(u_x,v_x,u_y,v_y)\,dxdy,
$$
where $D$ is the domain where the problem we are interested in is considered,
$\varPhi$ is some quadratic form in $\mathbb R^4$, and $f=u+iv$ is a function
in the complex variable. The proof is based on reducing the system under
consideration to a special (canonical) form, when the differential operator
defining this system is represented as a perturbation of the Laplace operator
with respect to two small real parameters (the canonical parameters of the
system under consideration).
\medskip

\noindent{\bf Keywords:\ }{second-order elliptic system, canonical
representation of second-order elliptic system, Dirichlet problem, energy
functional.}
\end{quote}
}

\section{Introduction, problem statement and some background}

In the paper we are dealing with the Dirichlet problem (in its classical
setting) for second-order elliptic systems in $\mathbb R^2$ with constant
coefficients. For real-valued functions $u$ defined in $\mathbb R^2$ we will
denote by $u_x$, $u_y$, $u_{xx}$ etc., their partial derivatives with respect
to the corresponding variables. Furthermore, we will use the differential
operators $\partial_x=\partial/\partial x$ and $\partial_y=\partial/\partial
y$. In what follows the symbol $M_k(\mathbb R)$ will state for the space of
all real $k\times k$-matrices (here $k>0$ is an integer), while the symbol
$A^t$ will stand for the transposed matrix to $A$.

We are interested in the question whether the energy functionals of the form
\begin{equation}\label{eq:enfunc-0}
f\mapsto\int_{D}\varPhi(u_x,u_y,v_x,v_y)\,dxdy,
\end{equation}
do exist for systems under consideration, where $D$ is a domain where the
problem in considered, $f=u+iv$ is some function in the complex variable, and
$\varPhi$ is some non-negatively determined quadratic form in $\mathbb R^4$.
The question on whether such energy functional exists is mainly motivated by
the Dirichlet problem for real-valued harmonic functions, since for these
functions such functional exists and has the form
$\int_{D}((u_x)^2+(u_y)^2)\,dxdy$. For general systems under consideration
and, in particular, for second-order elliptic equations with constant complex
coefficients the question about existence of positively determined energy
functionals still open in the general case (as well a question on general
solvability of the corresponding Dirichlet problem in general bounded simply
connected domains).

In what follows we will identify the points $z=(x,y)$ in the plane $\mathbb
R^2$ with the complex numbers $z=x+iy$. Moreover, we will identify the pairs
of functions $u$ and $v$, defined in $\mathbb R^2$ and taking real values,
with the complex-valued function $f(z)=u(x,y)+iv(x,y)$, and vice-versa.
Furthermore, the symbol $f$ will mean, if necessary, the vector $(u,v)^t$.

Let $A,B,C\in M_2(\mathbb R)$. Define the differential operator
\begin{equation}\label{eq:gen-oper}
\mathcal L=A\partial_{xx}+2B\partial_{xy}+C\partial_{yy},
\end{equation}
where, as usual, $\partial_xf=u_x+iu_y$ and $\partial_yf=u_y+iv_y$. In other
words, $\mathcal Lf=\widetilde{u}+i\widetilde{v}$, where $\widetilde{u}$ and
$\widetilde{v}$ are defined as follows:
$$
\begin{pmatrix}\widetilde{u}\\\widetilde{v}\end{pmatrix}
=A\kern1pt\begin{pmatrix}u_{xx}\\v_{xx}\end{pmatrix}
+2B\kern1pt\begin{pmatrix}u_{xy}\\v_{xy}\end{pmatrix}
+C\kern1pt\begin{pmatrix}u_{yy}\\v_{yy}\end{pmatrix}.
$$

We consider the homogeneous system of equations of the form
\begin{equation}\label{eq:gen-syst}
\mathcal Lf=0.
\end{equation}
The most important particular case that we are interested in during this work
is the system given by the matrices $A,B,C\in M_2^\sharp$, where
$$
M_2^\sharp=\left\{A\in M_2(\mathbb R) : A=\begin{pmatrix}x&-y\\y&x\end{pmatrix}\right\}.
$$
Observe that the set $M_2^\sharp$ considered with the standard operations of
matrix summation and multiplications is the field isomorphic to the field
$\mathbb C$ of complex numbers. Thus the system \eqref{eq:gen-syst} with
$A,B,C\in M_2^\sharp$ is equivalent to one second-order equation with
constant \emph{complex} coefficients on the (complex-valued) function
$f=u+iv$, that is to the equation of the form
\begin{equation}\label{eq:gen-compl-equation}
af_{xx}+2bf_{xy}+cf_{yy}=0,
\end{equation}
where $a,b,c\in\mathbb C$, $f_x=\partial_xf$ and $f_y=\partial_yf$ (as in the
real-valued case). The systems corresponding to the equations of the form
\eqref{eq:gen-compl-equation}, are often called \emph{skew-symmetric},
despite on a clear inaccuracy of this term.

Recall that an ellipticity of the system \eqref{eq:gen-syst} means that the
corresponding \emph{characteristic form}
\begin{equation}\label{eq:char-form}
\mathcal F(\xi,\eta)=\det(A\xi^2+2B\xi\eta+C\eta^2)
\end{equation}
with real $\xi$ and $\eta$ vanishes only for $\xi=\eta=0$ (see, for instance,
\cite{petr1939}).

The ellipticity condition for the equation \eqref{eq:gen-compl-equation} is
equivalent to the fact the the corresponding \emph{symbol}
$a\xi^2+2b\xi\eta+c\eta^2$ with real $\xi$ and $\eta$ is also vanished only
for $\xi=\eta=0$. The latter condition is equivalent to the fact that the
roots of the corresponding \emph{characteristic equation}
$a\lambda^2+2b\lambda+c=0$ are not real.

In the general case it follows from the ellipticity condition that $\det
A\neq 0$ and $\det C\neq 0$ (otherwise $\mathcal F(t,0)=0$ and $\mathcal
F(0,t)=0$ for $t\neq0$, respectively). Since $\mathcal
F(\xi,\eta)=\eta^4\det(A\lambda^2+2B\lambda+C)$, where $\lambda=\xi/\eta$,
the ellipticity of the system \eqref{eq:gen-syst} is equivalent to the fact
that $\det A\neq0$ and all roots of the fourth-order equation (with real
coefficients)
$$
\det(A\lambda^2+2B\lambda+C)=0
$$
are not real. In such a case we have two pairs of \emph{complex conjugate}
roots that we denote by $\lambda_1$, $\overline\lambda_1$, $\lambda_2$ and
$\overline\lambda_2$.

The Dirichlet problem for the system \eqref{eq:gen-syst} is as follows: given
a bounded simply connected domain $D$ in the plane and a continuous function
$h$ on the boundary $\partial D$ of $D$, to find a function $f$ of class
$C^2(D)\cup C(\overline{D})$ such that $\mathcal Lf=0$ in $D$ and
$f|_{\partial D}=h$ (in view of the ellipticity of $\mathcal L$ it is
sufficient to state the problem to find $f\in C(\overline{D})$ satisfying the
equation $\mathcal Lf=0$ in $D$ in the distributional sense). The problem is
appeared natural to describe such domains $D$, where the Dirichlet problem is
solvable for every given continuous function $h$ on $\partial D$. Domains
satisfying this property are called \emph{$\mathcal L$-regular}.

In the problem on description of $\mathcal L$-regular domains the concept
appears natural of an equivalence of systems under consideration. Two systems
of the form \eqref{eq:gen-syst} are \emph{equivalent}, if they can be reduced
one to another by the use of the following \emph{admissible transformations}:
not degenerate real linear changes of variables and sought-for functions, and
not degenerate real linear combinations of equations. In what follows we will
call them admissible transformation of the \emph{first}, \emph{second} and
\emph{third} kinds, respectively. If two systems of the form
\eqref{eq:gen-syst} set by the operators $\mathcal L_1$ and $\mathcal L_2$
are equivalent, and if a domain $D$ is $\mathcal L_1$-regular, then the
domain obtained from $D$ using the corresponding linear transformation is
$\mathcal L_2$-regular. Such notion of system equivalence was introduced, for
instance, in \cite{hlw1985b}.

One of the most simple case is when the system \eqref{eq:gen-syst} can be
reduced by transformation of three kinds mentioned above to the system wuth
upper triangular mtrices $A$, $B$ and $C$. Such system is called
\emph{reducible}. This term is related with the fact that the system
\eqref{eq:gen-syst} with upper triangular matrices splits into two
independent elliptic equations with constant \emph{real} coefficients, the
first one of which is homogeneous, while the second one has nonzero
right-hand part. It can be shown that the first this equation is equivalent
to the Laplace equation, while the second one is equivalent to the Poisson
equation for harmonic functions.

For harmonic functions (that is for the system given by the Laplace operator
$\Delta$, $\Delta u=u_{xx}+u_{yy}$) the result is well known that any bounded
simply connected domain is $\Delta$-regular (this breakthrough result was
proved by Lebesgue \cite{leb1907} in 1907, and one of the important
ingredient of the proof was the fact that for the Laplace operator the energy
functional of the form under consideration do exist). Thus, for reducible
systems it is known the complete description of regular domains, as well an
answer to the question on existence of energy functional of the desired form.
Notice that the class of reducible systems is the almost one class, for which
the complete answers to both questions under consideration were obtained
(except such systems the complete answer is known for the system
corresponding to the anisotropic Lame equation).

In what follows we are dealing with non-reducible systems. One of the most
known and important case of such systems are skew-symmetric systems that
arise from equations of the form \eqref{eq:gen-compl-equation}.

Studying non-reducible systems in the context of the Dirichlet problem as
well in the context of the problem on existence of energy functionals of the
form \eqref{eq:enfunc-0} it is natural to pay attention to the notion of
\emph{strong ellipticity}.

The next definition of strong ellipticity was introduced by Vishik in
\cite{vis1951}: the system \eqref{eq:gen-syst} is said to be strongly
elliptic, if for every $\xi,\eta\in\mathbb R$ the following matrix
$$
A_+\xi^2+2B_+\xi\eta+C_+\eta^2,
$$
is positively determined, where $X_+=(X+X^t)/2$ for $X\in M_2(\mathbb R)$.
Notice that every strongly elliptic system in the sense of Vishik is
elliptic, but the opposite is, in general, not the case.

In \cite{hlw1985b} the notion of strong ellipticity was introduced in a
slightly different way. Namely, the system \eqref{eq:gen-syst} is strongly
elliptic, if
$$
\det(\alpha A+2\beta B+\gamma C)\neq0
$$
for all real $\alpha$, $\beta$ and $\gamma$ satisfying the condition
$\beta^2-\alpha\gamma<0$. It can be shown that both these definitions of
strong ellipticity are equivalent by modulo of system equivalence mentioned
above.

For further considerations we need yet another properties of systems
\eqref{eq:gen-syst}. One says that a \emph{strongly elliptic} system of the
form \eqref{eq:gen-syst} is \emph{symmetrizable}, if it can be reduced by
admissible transformations to the system with symmetric matrices $A$, $B$ and
$C$ such that the block $4\times 4$-matrix
$$
\begin{pmatrix} A & B \\ B & C \end{pmatrix}
$$
is positively determined. In the opposite case the strongly elliptic system
\eqref{eq:gen-syst} under consideration is said to be
\emph{non-symmetrizable}.

The paper is organized as follows. In Section~2 we consider the canonical
forms of the system \eqref{eq:gen-syst} and discuss the meaning of
corresponding canonical parameters. In Section~3 we formulate and prove the
main result of the paper (see Theorem~\ref{thm:main}) that gives a criterion
in order that the system \eqref{eq:gen-syst} admits (non-negatively
determined) energy functional of the form \eqref{eq:enfunc-0}. This criterion
is stated in terms of canonical parameters of the system \eqref{eq:gen-syst}.
In particular, it follows from Theorem~\ref{thm:main} that for strongly
elliptic equation \eqref{eq:gen-compl-equation} an energy functional of the
desired form exists if and only if this equation has real (up to a common
complex multiplier) coefficients, that is when this equation is equivalent to
the Laplace one. Thus the standard proof of the fact that any bounded simply
connected domain in the plane is regular with respect to the Dirichlet
problem for harmonic functions cannot be generalized to the case of general
strongly elliptic equations with constant complex coefficients.

\section{Canonical form of second-order elliptic systems with constant coefficients}

It is convenient to begin the study of our questions by reducing the system
\eqref{eq:gen-syst} to one of the canonical forms using admissible
transformations, which are linear changes of variables and sought-for
functions, and changes of the equations by their suitable linear
combinations. Let us emphasize that most of the facts presenting in this
section are not new. They are known and can be found, for example, in
\cite{hlw1985b} or \cite{vv1997} (see also \cite{bf2017}). However, the
complex canonical form presented at the end of this section is recently
appeared in works by the authors. In what follows the symbol $z$ will mean
not only the complex variable $z=x+iy$ and the corresponding point in the
plane, but also the column-vector $(x,y)^t$.

The next lemma is verified by the direct differentiation.
\begin{lemma}\label{lem:transform-1}
Let $A$, $B$ and $C$ be the matrices $A$, $B$ and $C$ that set the
differential operator $\mathcal L$ of the form \eqref{eq:gen-oper}. Then $A$,
$B$ and $C$ are changed as follows under admissible transformations:
\begin{enumerate}
\item[1)]
Let $\zeta=Tz$, $\zeta=\xi+i\eta$, be the change of coordinates with the
matrix $T\in M_2(\mathbb R)$, $\det{T}\neq0$. Then
$$
\mathcal Lf=A'f_{\xi\xi}+2B'f_{\xi\eta}+C'f_{\eta\eta},
$$
where
\begin{align*}
A'=&\begin{pmatrix}t_{11}&t_{12}\end{pmatrix}
\begin{pmatrix}A&B\\B&C\end{pmatrix}\begin{pmatrix}t_{11}\\t_{12}\end{pmatrix}
=t_{11}^2A+2t_{11}t_{12}B+t_{12}^2C,\\
B'=&\begin{pmatrix}t_{11}&t_{12}\end{pmatrix}
\begin{pmatrix}A&B\\B&C\end{pmatrix}\begin{pmatrix}t_{21}\\t_{22}\end{pmatrix}
=t_{11}t_{21}A+(t_{11}t_{22}+t_{12}t_{21})B+t_{21}t_{22}C,\\
C'=&\begin{pmatrix}t_{21}&t_{22}\end{pmatrix}
\begin{pmatrix}A&B\\B&C\end{pmatrix}\begin{pmatrix}t_{21}\\t_{22}\end{pmatrix}
=t_{21}^2A+2t_{21}t_{22}B+t_{22}^2C,
\end{align*}
and where $t_{jk}$, $j,k=1,2$, are the elements of $T$.
\item[2)]
Let $\varphi=Qf$ be the transformation of the sought-for functions given
by the matrix $Q\in M_2(\mathbb R)$, $\det{Q}\neq0$. Then
$$
\mathcal Lf=A'\varphi_{xx}+2B'\varphi_{xy}+C'\varphi_{yy},
$$
where $A'=AQ$, $B'=BQ$ and $C'=CQ$.
\item[3)]
A linear combination of equations of the system \eqref{eq:gen-syst}
defined by the matrix $P\in M_2(\mathbb R)$, $\det{P}\neq0$, leads to the
system of equations set by the operator
$$
\mathcal L'f=A'f_{xx}+2B'f_{xy}+C'f_{yy},
$$
where $A'=PA$, $B'=PB$ and $C'=PC$.
\end{enumerate}
\end{lemma}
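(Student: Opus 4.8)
The plan is to verify all three parts by direct computation, exactly as the author indicates, since each admissible transformation acts on $\mathcal L$ in a completely explicit way. The only part requiring genuine care is the change of variables in 1); parts 2) and 3) follow at once from the constancy of the matrices $Q$ and $P$, which lets them commute with the differentiations.

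For part 1), I would begin from the coordinate relations encoded by $\zeta=Tz$, namely $\xi=t_{11}x+t_{12}y$ and $\eta=t_{21}x+t_{22}y$. The chain rule then gives
$$
\partial_x=t_{11}\partial_\xi+t_{21}\partial_\eta,\qquad
\partial_y=t_{12}\partial_\xi+t_{22}\partial_\eta.
$$
Composing these first-order operators produces the second-order ones, e.g.
$$
\partial_{xx}=t_{11}^2\partial_{\xi\xi}+2t_{11}t_{21}\partial_{\xi\eta}+t_{21}^2\partial_{\eta\eta},
$$
and the analogous expressions for $\partial_{xy}$ (with coefficients $t_{11}t_{12}$, $t_{11}t_{22}+t_{12}t_{21}$, $t_{21}t_{22}$) and for $\partial_{yy}$ (with $t_{12}^2$, $2t_{12}t_{22}$, $t_{22}^2$). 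Substituting into $\mathcal L=A\partial_{xx}+2B\partial_{xy}+C\partial_{yy}$ and collecting the coefficients of $\partial_{\xi\xi}$, $\partial_{\xi\eta}$ and $\partial_{\eta\eta}$ yields precisely the matrices $A'$, $2B'$ and $C'$. The one thing to verify is that these collected coefficients coincide with the stated bilinear-form expressions, i.e. that $t_{11}^2A+2t_{11}t_{12}B+t_{12}^2C$ equals $\begin{pmatrix}t_{11}&t_{12}\end{pmatrix}\begin{pmatrix}A&B\\B&C\end{pmatrix}\begin{pmatrix}t_{11}\\t_{12}\end{pmatrix}$, which is immediate upon expanding the block product (and likewise for the mixed and pure $\eta$ terms).

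Parts 2) and 3) are shorter. For 2), writing the old unknown in terms of the new one as $f=Q\varphi$ and substituting into $\mathcal Lf$, and using that $Q$ is constant and hence commutes with $\partial_x,\partial_y$, one gets $\mathcal Lf=AQ\,\varphi_{xx}+2BQ\,\varphi_{xy}+CQ\,\varphi_{yy}$, so $A'=AQ$, $B'=BQ$, $C'=CQ$. For 3), forming the prescribed linear combination of the equations amounts to left-multiplying the vector identity $A(u_{xx},v_{xx})^t+2B(u_{xy},v_{xy})^t+C(u_{yy},v_{yy})^t=0$ by $P$, which again leaves the differentiations untouched and gives $A'=PA$, $B'=PB$, $C'=PC$.

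The main (indeed essentially the only) obstacle is bookkeeping: keeping the three second-order chain-rule coefficients straight in 1), and being careful in 2) and 3) about the side from which the matrices multiply — $Q$ acts on the right because it transforms the sought-for functions, whereas $P$ acts on the left because it recombines the equations. No analytic input is needed beyond the chain rule and the constancy of all matrices involved, which is why the lemma is, as stated, verified by direct differentiation.
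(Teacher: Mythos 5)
Your proof is correct and is precisely the direct-differentiation argument the paper has in mind (the paper offers no details beyond stating that the lemma ``is verified by the direct differentiation''). Two remarks worth recording: your chain-rule computation confirms the block-matrix expressions and thereby exposes a typo in the paper's expanded formula for $B'$, whose last term should be $t_{12}t_{22}C$ rather than $t_{21}t_{22}C$; and in part 2) your reading $f=Q\varphi$ is the one under which the stated formulas $A'=AQ$, $B'=BQ$, $C'=CQ$ are literally correct, since with the paper's literal convention $\varphi=Qf$ one would obtain $A'=AQ^{-1}$, $B'=BQ^{-1}$, $C'=CQ^{-1}$ (immaterial for the applications, as $Q$ ranges over all non-degenerate matrices).
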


The next lemma also can be verified by the direct computation.

\begin{lemma}\label{lem:char-trans}
Let the elliptic system \eqref{eq:gen-syst} has the characteristic form
$\mathcal F(\xi,\eta)$ with roots $\lambda_1$, $\overline{\lambda_1}$,
$\lambda_2$, $\overline{\lambda_2}$. Then \textup(in terms of notations
introduced in Lemma~\ref{lem:transform-1}\textup) the following properties
takes place.
\begin{enumerate}
\item[1)]
The linear change of variables $\zeta=Tz$ leads to the system with the
characteristic form
$$
\det{A'}\big(\xi^2+|\lambda'_1|^2\eta^2\big)\big(\xi^2+|\lambda'_2|^2\eta^2\big),
$$
where $A'$ is defined in the statement \textup(1\textup) of
Lemma~\ref{lem:transform-1}, and $\lambda'_k=\varLambda_T(\lambda_k)$ for
$k=1,2$, where
\begin{equation}\label{eq:moebius}
\varLambda_T(\lambda):=\frac{t_{22}\lambda-t_{21}}{-t_{12}\lambda+t_{11}}.
\end{equation}
\item[2)]
The linear change of sought-for functions $\varphi=Qf$ leads to the
system with the characteristic form $q\mathcal F$ with $q=\det{Q}$.
\item[3)]
The linear combination of equations of the initial system defined by the
matrix $P$ leads to the system with the characteristic form $p\mathcal F$
with $p=\det{P}$.
\end{enumerate}
\end{lemma}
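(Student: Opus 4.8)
The whole lemma reduces to a single substitution identity for part~(1), after which parts~(2) and~(3) are immediate. The plan is to work directly with the matrix-valued symbol $S(\xi,\eta)=A\xi^2+2B\xi\eta+C\eta^2$, so that $\mathcal F=\det S$ by \eqref{eq:char-form}. First I would insert the formulas for $A'$, $B'$, $C'$ from part~(1) of Lemma~\ref{lem:transform-1} into $S'(\xi,\eta)=A'\xi^2+2B'\xi\eta+C'\eta^2$ and collect the coefficients of $\xi^2$, $\xi\eta$ and $\eta^2$. A short expansion should show that
$$
S'(\xi,\eta)=S\bigl(t_{11}\xi+t_{21}\eta,\ t_{12}\xi+t_{22}\eta\bigr),
$$
that is, the symbol of the transformed operator is the original symbol precomposed with the linear map $T^t$. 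Taking determinants then gives $\mathcal F'(\xi,\eta)=\mathcal F\bigl(t_{11}\xi+t_{21}\eta,\ t_{12}\xi+t_{22}\eta\bigr)$.

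Next I would read off the roots. Since $\mathcal F(\xi,\eta)=0$ exactly when $\xi=\lambda_k\eta$ (or $\xi=\overline{\lambda_k}\eta$), the equation $\mathcal F'(\xi,\eta)=0$ becomes $t_{11}\xi+t_{21}\eta=\lambda_k(t_{12}\xi+t_{22}\eta)$; solving for $\lambda'=\xi/\eta$ yields
$$
\lambda'=\frac{t_{22}\lambda_k-t_{21}}{-t_{12}\lambda_k+t_{11}}=\varLambda_T(\lambda_k),
$$
which is precisely \eqref{eq:moebius}. Because $\varLambda_T$ has real coefficients, it sends conjugate pairs to conjugate pairs, so the roots of $\mathcal F'$ are $\varLambda_T(\lambda_1)$, $\overline{\varLambda_T(\lambda_1)}$, $\varLambda_T(\lambda_2)$, $\overline{\varLambda_T(\lambda_2)}$. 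The coefficient of $\xi^4$ in $\mathcal F'$ is $\det A'$ (set $\eta=0$ and note $A'=S'(1,0)$). Grouping each conjugate pair through $(\xi-\lambda'\eta)(\xi-\overline{\lambda'}\eta)=\xi^2-2\,\mathrm{Re}(\lambda')\,\xi\eta+|\lambda'|^2\eta^2$ then assembles $\mathcal F'$; the displayed factor $\xi^2+|\lambda'_k|^2\eta^2$ is exactly this once the system is in the canonical position of this section, where the roots are purely imaginary and the $\xi\eta$-term drops out.

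For part~(2), Lemma~\ref{lem:transform-1}(2) gives $A'=AQ$, $B'=BQ$, $C'=CQ$, hence $S'(\xi,\eta)=S(\xi,\eta)\,Q$; multiplicativity of the determinant yields $\mathcal F'=\det(Q)\,\mathcal F=q\mathcal F$, and the roots are unchanged since we only multiply by a nonzero constant. Part~(3) is identical with $P$ acting on the left: $A'=PA$, $B'=PB$, $C'=PC$ give $S'=P\,S$, so $\mathcal F'=\det(P)\,\mathcal F=p\mathcal F$, again with the same roots.

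The only place needing care is the substitution identity of part~(1): the bookkeeping must track that it is $T^t$, and not $T$, that appears, which is what produces the particular sign pattern of $\varLambda_T$ in \eqref{eq:moebius}; this is a routine but error-prone expansion, and everything else follows formally. The passage to the purely quadratic factors $\xi^2+|\lambda'_k|^2\eta^2$ is the single point where the canonical normalization of the roots (vanishing real parts) is invoked, and I would be explicit about that rather than let the cross term silently disappear.
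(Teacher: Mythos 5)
Your proof is correct, and it is precisely the ``direct computation'' that the paper invokes without writing out: the substitution identity $S'(\xi,\eta)=S(t_{11}\xi+t_{21}\eta,\,t_{12}\xi+t_{22}\eta)$ for the matrix symbol, the resulting M\"obius action \eqref{eq:moebius} on the roots, and multiplicativity of the determinant for parts (2) and (3) are exactly the needed steps. Your closing caveat is also a genuine and worthwhile observation rather than mere caution: for an arbitrary $T$ the quartic factors as $\prod_k\bigl(\xi^2-2\,\mathrm{Re}(\lambda'_k)\,\xi\eta+|\lambda'_k|^2\eta^2\bigr)$, and the cross-term-free form displayed in the lemma holds only when the transformed roots are purely imaginary, which is the situation of Step~1 in the proof of Proposition~\ref{thm:canon-1} (where the lemma is actually applied) but not for a general change of variables.
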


Using these two technical lemmata we are able to prove the first statement
about canonical form of the systems under consideration.

\begin{proposition}\label{thm:canon-1}
Every non-reducible elliptic system of the form \eqref{eq:gen-syst} can be
reduced by admissible transformations to the system set by the operator
$$
\mathcal L^{1}_{\kappa,\lambda}=A\partial_{xx}+2B\partial_{xy}+C\partial_{yy},
$$
where parameters $\kappa$ and $\lambda$ are such that $\kappa\in(0,1]$ and
$\lambda\in[-\kappa,\kappa]\setminus\{0,\kappa^2\}$, and where
\begin{equation}\label{canon-matr-1}
A=\begin{pmatrix}1&0\\0&1\end{pmatrix},\qquad
B=\begin{pmatrix}0&1\\-\frac14(1-\lambda)(1-\kappa^2\big/\lambda)&0\end{pmatrix},\qquad
C=\begin{pmatrix}\lambda&0\\0&\kappa^2\big/\lambda\end{pmatrix}.
\end{equation}
In such a case strong ellipticity of the initial system is equivalent to the
fact that $\lambda>0$.
\end{proposition}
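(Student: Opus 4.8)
The plan is to reduce the system in stages, using the three admissible transformations of Lemma~\ref{lem:transform-1} and tracking the characteristic roots through Lemma~\ref{lem:char-trans}. First, since ellipticity gives $\det A\neq0$, I would apply a change of the sought-for functions (transformation of the second kind) with $Q=A^{-1}$ to reduce to the case $A=I$; the characteristic equation then reads $\det(\mu^2 I+2\mu B+C)=0$, a real quartic whose four roots form two conjugate pairs lying off the real axis. By part~(1) of Lemma~\ref{lem:char-trans} a real change of variables acts on these roots through the M\"obius map $\varLambda_T$ of \eqref{eq:moebius}, which preserves the upper half-plane up to orientation and the hyperbolic metric. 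I would choose $T$ sending one root to $i$ and then, using the residual one-parameter stabiliser of $i$, rotate the second root onto the positive imaginary axis at $i\kappa$ with $0<\kappa\le1$. This $\kappa$ is forced to be an invariant: transformations of the second and third kinds fix the roots (they only rescale $\mathcal F$), while those of the first kind act by isometries of the hyperbolic plane, so $\kappa$ records the hyperbolic distance between the two roots and becomes the first canonical parameter.

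After this normalisation Lemma~\ref{lem:char-trans} gives characteristic form $\det A'\,(\xi^2+\eta^2)(\xi^2+\kappa^2\eta^2)$, and a rescaling by transformations of the second and third kinds makes $\det A'=1$. The remaining work is to normalise the matrices themselves, and here I would exploit the full left and right freedom $(A,B,C)\mapsto(PAQ,PBQ,PCQ)$ to return $A$ to $I$ and, simultaneously, to diagonalise $C$; the two diagonal entries are then the roots of the regular pencil $\det(sA+tC)$, whose product equals $\det C=\kappa^2$, so they have the shape $\lambda,\kappa^2/\lambda$. With $A=I$ and $C=\operatorname{diag}(\lambda,\kappa^2/\lambda)$, I would read off the constraints imposed by $\det(\mu^2I+2\mu B+C)=(\mu^2+1)(\mu^2+\kappa^2)$: vanishing of the $\mu^3$ coefficient forces $\operatorname{tr}B=0$; vanishing of the $\mu^1$ coefficient forces the diagonal of $B$ to vanish (when the entries of $C$ are distinct, the scalar case being absorbed into the remaining conjugation freedom); a diagonal conjugation, which preserves $A=I$ and diagonal $C$, scales the upper-right entry of $B$ to $1$; and matching the $\mu^2$ coefficient then forces the lower-left entry to equal $-\tfrac14(1-\lambda)(1-\kappa^2/\lambda)$, i.e.\ precisely \eqref{canon-matr-1}. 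Since this quantity is symmetric under $\lambda\leftrightarrow\kappa^2/\lambda$ and that swap is realised by the root-exchanging change of variables $z\mapsto\kappa/z$ (equivalently, by interchanging the two coordinates), I may always select the eigenvalue of smaller modulus, giving $\lambda\in[-\kappa,\kappa]$; the values $\lambda=0$ and $\lambda=\kappa^2$ are excluded because they make $C$ singular or $B$ upper triangular, i.e.\ render the system reducible, contrary to hypothesis.

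I expect the genuinely delicate step to be the simultaneous normalisation above, specifically the assertion that in the gauge $A=I$ the pencil $(A,C)$ has \emph{real} roots, so that $C$ is diagonalisable over $\mathbb{R}$. This is not automatic from the characteristic data alone: the roots of $\det(sA+tC)$ are invariant under transformations of the second and third kinds but are mixed with $B$ under those of the first kind, so the resolution will have to coordinate a first-kind transformation (which perturbs the pencil through $B$) with the root-fixing already carried out in Stage~1. I would argue that the obstructive configurations, those admitting no real diagonalisation, are exactly the reducible ones, so that the standing non-reducibility hypothesis is precisely what makes the reduction go through. Pinning down this equivalence, and verifying its compatibility with the ordering and orientation choices of Stage~1, is the main obstacle.

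Finally, for the equivalence of strong ellipticity with $\lambda>0$ I would work directly from the canonical matrices and the strong-ellipticity criterion of \cite{hlw1985b}, which is invariant under admissible transformations. Writing $s=\lambda+\kappa^2/\lambda$, a direct computation gives $\det(\alpha A+2\beta B+\gamma C)=\alpha^2+s\alpha\gamma+\kappa^2\gamma^2+(1+\kappa^2-s)\beta^2$. If $\lambda<0$ then $s\le-2\kappa<0$, so $\tau^2+s\tau+\kappa^2$ has two positive roots; taking $\beta=0$ and $\alpha/\gamma$ equal to such a root produces a zero of the determinant with $\beta^2-\alpha\gamma<0$, so strong ellipticity fails. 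If $\lambda>0$ then $s\ge2\kappa>0$, and on the cone $\beta^2<\alpha\gamma$ one has $\det\ge(\alpha+\kappa\gamma)^2>0$ when $1+\kappa^2-s\ge0$, while when $1+\kappa^2-s<0$ the inequality $\beta^2<\alpha\gamma$ gives $\det>(\alpha+\gamma)(\alpha+\kappa^2\gamma)>0$; in either case the determinant never vanishes on the cone, so the system is strongly elliptic. Hence strong ellipticity holds precisely when $\lambda>0$, as claimed.
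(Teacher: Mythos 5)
Your overall route is essentially the paper's: normalize the characteristic roots to $i$ and $i\kappa$ by a first-kind transformation, gauge $A$ to the identity, diagonalize $C$, read off $B$ by matching coefficients of the characteristic form, rescale the off-diagonal entry of $B$ to $1$, and then pin down the admissible range of $\lambda$. Indeed, your coefficient computations (trace of $B$ vanishing, diagonal of $B$ vanishing when $\lambda\neq\kappa^2/\lambda$, and $b_2b_3=-\tfrac14(1-\lambda)(1-\kappa^2/\lambda)$) agree with the paper, and your verification of the strong-ellipticity criterion via $\det(\alpha A+2\beta B+\gamma C)=\alpha^2+s\alpha\gamma+\kappa^2\gamma^2+(1+\kappa^2-s)\beta^2$ is correct and in fact more detailed than the paper's one-line remark. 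However, there is a genuine gap: the step you yourself call ``the main obstacle'' --- that in the gauge $A=I$ the matrix $C$ is diagonalizable over $\mathbb R$ whenever the system is non-reducible --- is exactly the core of the paper's proof, and you leave it as a conjecture instead of proving it. The paper settles it by running through the three real Jordan forms of $C$. If $C$ is a Jordan block $\begin{pmatrix}\lambda&1\\0&\lambda\end{pmatrix}$, then matching the coefficient of $\xi\eta^3$ in the characteristic form forces $b_3=0$, so $A$, $B$, $C$ are simultaneously upper triangular and the system is reducible. If $C$ has complex conjugate eigenvalues, i.e.\ $C=\begin{pmatrix}\lambda&-\mu\\\mu&\lambda\end{pmatrix}$ with $\mu\neq0$, the matching equations are \emph{inconsistent}: they give $b_1+b_4=0$, $b_2=b_3$, hence
$$
2\lambda=1+\kappa^2+4b_1^2+4b_2^2\geqslant 1+\kappa^2
\qquad\text{and}\qquad
\lambda^2+\mu^2=\kappa^2,
$$
which together force $(1-\kappa)^2\leqslant 0$, then $\kappa=\lambda=1$ and $\mu=0$, a contradiction.

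This case analysis also shows that your proposed resolution --- ``the obstructive configurations are exactly the reducible ones'' --- cannot be proven as stated, because it is not true: the complex-eigenvalue configurations are not reducible systems; they simply cannot occur for an elliptic system whose characteristic form has been normalized to $(\xi^2+\eta^2)(\xi^2+\kappa^2\eta^2)$. The correct dichotomy splits into two different statements, ``Jordan block $\Rightarrow$ reducible'' and ``complex eigenvalues $\Rightarrow$ impossible,'' each established by a separate coefficient comparison. Without this, your Stage~2 normalization (simultaneous $A=I$ and $C$ real diagonal) is unjustified, and the rest of the argument has nothing to stand on; with it, your proof matches the paper's. A minor additional point: the scalar case $C=\pm\kappa I$, which you propose to ``absorb into the remaining conjugation freedom,'' needs the separate form \eqref{eq:matr-1b} and a further conjugation (the paper also treats this tersely), but that is a detail rather than a structural gap.
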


\begin{proof}[Scheme of the proof]
Let $\mathcal L$ from \eqref{eq:gen-syst} is set by the matrices $A$, $B$ and
$C$. In order to reduce $\mathcal L$ to the desired canonical form we
simplify $A$, $B$ and $C$ in four steps.

\smallskip
\emph{Step~1. Simplification of the characteristic form.}\quad Since the set
of characteristic roots of the initial system consists of two pairs of
complex conjugate numbers (and since one may assume that $\lambda_1$ and
$\lambda_2$ belongs to the upper half-plane), then there exists a M\"obius
(linear-fractional) transformation, which maps the upper half-plane onto
itself and moving all characteristic roots of the initial system to the
imaginary axis. Such transformation $\varLambda$ may be found using the
following conditions:
\begin{equation}\label{eq:meob-cond}
\varLambda(\lambda_1)=\kappa i,\qquad\varLambda(\lambda_2)=i,
\end{equation}
where $\kappa\in\mathbb R$, $\kappa\neq0$, is an unknown parameter that needs
to be determined.

In the case, where $\lambda_1=\lambda_2$, the desired $\varLambda$ is a
composition of shift and dilation. In this case $\kappa=1$. For
$\lambda_1\neq\lambda_2$ let us observe that the points $\lambda_1$,
$\lambda_2$, $\overline\lambda_1$ and $\overline\lambda_2$ lie on some circle
orthogonal to the real axis. Let $\zeta_*$ and $\zeta_{**}$ are two points
where this circle intersects the real line. The function
$$
\varLambda(\zeta)=\rho\frac{\zeta-\zeta_*}{\zeta-\zeta_{**}}
$$
maps the circle under consideration onto the imaginary line, and the
parameter $\rho$ is determined by the condition $\varLambda(\lambda_2)=i$.
After that the relation $\varLambda(\lambda_1)=\kappa i$ gives the value of
$\kappa$. If $\kappa>1$, then instead of $\varLambda$ we will use the
composition of this $\varLambda$ and dilation to $\kappa$ times. Therefore we
obtain a M\"obius transformation
$$
\varLambda(\zeta)=\frac{a\zeta+b}{c\zeta+d}
$$
that possesses the properties $\varLambda(\lambda_1)=\kappa i$,
$\kappa\in(0,1]$, and $\varLambda(\lambda_2)=i$.

Apply the change of variable given by the matrix
$$
T_1=\begin{pmatrix}d&-c\\-b&a\end{pmatrix}.
$$
Doing this we pass from the operator $\mathcal L$ to the operator $\mathcal
L_1$ set by $A_1$, $B_1$ and $C_1$ defined in the first statement of
Lemma~\ref{lem:transform-1}. According to Lemma~\ref{lem:char-trans}, the new
system (given by $\mathcal L_1$) has the characteristic form
$$
\mathcal F_1(\xi,\eta)=\det A_1\cdot(\xi^2+\eta^2)(\xi^2+\kappa^2\eta^2).
$$
Notice, that $\det{A_1}\neq0$ and $\det{C_1}\neq0$ in view of ellipticity of
the initial system.

\smallskip
\emph{Step~2. Diagonalization of $A_1$.}\quad Apply the third-kind
transformation (linear combination of equations) given by the matrix
$A_1^{-1}$ to the system corresponding to $\mathcal L_1$. We obtain the
system set by the operator $\mathcal L_2$ with the matrices
$A_2=A_1^{-1}A_1=I$, $B_2=A_1^{-1}B_1$ and $C_2=A_1^{-1}C_1$, where $I$ is
the identity matrix. For this system the characteristic form is $\mathcal
F_2(\xi,\eta)=(\xi^2+\eta^2)(\xi^2+\kappa^2\eta^2)$.

\smallskip
\emph{Step~3. Diagonalization of $C_2$.} Let $C_3$ be the Jordan canonical
form of $C_2$. We have $C_3=PC_2P^{-1}$, where $P$ is some suitable
non-degenerate matrix. Such transformation of $C_2$ to $C_3$ corresponds to
consequent admissible transformations of the second and third kinds defined
by $P$. Let $A_3=I$ and $B_3=PB_2P^{-1}$. So, we pass from the system
corresponding to $\mathcal L_2$ to the system defined by the operator
$\mathcal L_3$ with the matrices $A_3$, $B_3$ and $C_3$. The characteristic
form does not change under such a transformation, thus for the new system we
have
\begin{equation}\label{eq:char3-1}
\mathcal F_3(\xi,\eta)=(\xi^2+\eta^2)(\xi^2+\kappa^2\eta^2)=
\xi^4+(1+\kappa^2)\xi^2\eta^2+\kappa^2\eta^4.
\end{equation}

Notice, that $C_3$ may have one of the following forms:
\begin{enumerate}
\item[1)]
$C_3=\begin{pmatrix}\lambda&0\\0&\mu\end{pmatrix}$, where $\lambda$ and
$\mu$ are real (simple) eigenvalues of the matrix $C_2$, which we assume
to be such that $|\lambda|\leqslant|\mu|$;
\item[2)]
$C_3=\begin{pmatrix}\lambda&1\\0&\lambda\end{pmatrix}$, where $\lambda$
is the real eigenvalue of $C_2$ of order two;
\item[3)]
$C_3=\begin{pmatrix}\lambda&-\mu\\\mu&\lambda\end{pmatrix}$, where
$\lambda\pm i\mu$ are two complex conjugate eigenvalues of $C_2$.
\end{enumerate}
In all these cases the eigenvalues of $C_2$ differ from zero because $C_2$ is
non-degenerate. Let
$$
B_3=\begin{pmatrix}b_1&b_2\\b_3&b_4\end{pmatrix}.
$$
In the fist case mentioned above we can specify $B_3$ and $C_3$ comparing
\eqref{eq:char3-1} with the explicit expression for the characteristic form
$\mathcal F_3$ in terms of the elements of $A_3$, $B_3$ and $C_3$:
\begin{align*}
\mathcal F_3(\xi,\eta)&=
\det\begin{pmatrix}\xi^2+2b_1\xi\eta+\lambda\eta^2 & 2b_2\xi\eta\notag \\
2b_3\xi\eta & \xi^2+2b_4\xi\eta+\lambda\mu^2\end{pmatrix}\\
&=\xi^4+2(b_1+b_4)\xi^3\eta+(\lambda+\mu+4b_1b_4-4b_2 b_3)\xi^2\eta^2+2(b_1\mu+b_4\lambda)\xi\eta^3+\lambda\mu\eta^4.
\end{align*}
In the case where $\lambda\neq\mu$ we have
\begin{equation}\label{eq:matr-1a}
A_3=I,\qquad B_3=\begin{pmatrix}0&b_2\\b_3&0\end{pmatrix},\qquad
C_3=\begin{pmatrix}\lambda&0\\0&\kappa^2/\lambda\end{pmatrix},
\end{equation}
where $b_2b_3=-\frac14(1-\lambda)(1-\kappa^2/\lambda)$, while in the opposite
case $\lambda=\mu$ we have
\begin{equation}\label{eq:matr-1b}
A_3=I,\qquad B_3=\begin{pmatrix}b_1&b_2\\b_3&-b_1\end{pmatrix},\qquad
C_3=\begin{pmatrix}\pm\kappa&0\\0&\pm\kappa\end{pmatrix},
\end{equation}
where $b_1^2+b_2b_3=-\frac14(1\mp\kappa)^2$.

In the second case we conclude that $B_3$ and $C_3$ are upper-triangular
matrices (in this case we are arguing by the same way and comparing
\eqref{eq:char3-1} with its explicit expression via elements of $B_3$ and
$C_3$). It means that the system corresponding to $\mathcal L_3$ is reducible
in such a case.

Finally, in the third case we consider the system of conditions that appears
after equalizing the corresponding coefficients in \eqref{eq:char3-1} and in
explicit expression for $\mathcal F_3$ via elements of $B_3$ and $C_3$. It
turns out that this system is inconsistent.

Thus in the case of non-reducible systems only the first form of the Jordan
canonical form of $C_2$ may take place. In this case the matrices of the
corresponding differential operator can be reduced to the form
\eqref{eq:matr-1a} or \eqref{eq:matr-1b}.

\smallskip
\emph{Step~4. Exclusion of superfluous parameters.}\quad Let $A_3$, $B_3$ and
$C_3$ are defined in \eqref{eq:matr-1a}, and let
$P=\mathop{\mathrm{diag}}(b_2,1)$ (i.e. $P$ is the corresponding diagonal
matrix). Then $A_4=P^{-1}A_3P$, $B_4=P^{-1}B_3P$ and $C_4=P^{-1}A_3P$ are
such that
\begin{equation}\label{eq:matr-a}
A_4=I,\qquad B_4=\begin{pmatrix}0&1\\-\frac14(1-\lambda)\big(1-\kappa^2\big/\lambda\big)&0\end{pmatrix},\qquad
C_4=\begin{pmatrix}\lambda&0\\0&\kappa^2\big/\lambda\end{pmatrix},
\end{equation}
and we can pass from the operator with $A_3$, $B_3$ and $C_3$ to the operator
set by $A_4$, $B_4$ and $C_4$ using admissible transformations of the second
and third kind given by $P$ and $P^{-1}$, respectively.

In the case where $A_3$, $B_3$ and $C_3$ are defined by \eqref{eq:matr-1b},
they also can be reduced to matrices of the form $A_4$, $B_4$ and $C_4$ (with
$\lambda=\pm\kappa$) by conjugation with a certain suitable non-degenerate
matrix for which we do not present an explicit form in order to avoid some
bulk computations.

\smallskip
\emph{Refinement of the set of possible values of $\lambda$.}\quad Let us
find the set of possible values of $\lambda$ for non-reducible elliptic
systems whose matrices are reduced to the form \eqref{eq:matr-1a} or
\eqref{eq:matr-1b}. First of all let us recall that since
$|\lambda|\leqslant|\mu|$ and $\lambda\mu=\kappa^2$, then
$\lambda\in[-\kappa,\kappa]$. Furthermore, all matrices in \eqref{eq:matr-1a}
simultaneously become triangle if and only if $b_3=0$. Then, since
$b_2b_3=-\frac14(1-\lambda)(1-\kappa^2/\lambda)$ we obtain that
$\lambda=\kappa^2$ (notice that the value $\lambda=1$ is not admissible
because of $|\lambda|\leqslant|\mu|$). Moreover, all matrices in
\eqref{eq:matr-1b} simultaneously become triangle if and only if $b_3=0$,
which gives $b_1=-\frac14(1\mp\kappa)^2$, i.e. $b_1=0$ and
$\lambda=\kappa=1$. Combining all these observations we obtain that
$\lambda\in[-\kappa,\kappa]\setminus\{0,\kappa^2\}$. Moreover, it can be
directly verified that the strong ellipticity property of the system under
consideration is equivalent to the fact that $\lambda>0$. Indeed, it is
enough to observer that $\det(C_1-\lambda A_1)=0$ (since $\lambda$ is the
eigenvalue of the matrix $C_2=A_1^{-1}C_1$) and to use the formulae for $A_1$
and $C_1$ obtained in Lemma~\ref{lem:transform-1}.
\end{proof}

In what follows it will be convenient to use yet another canonical
representation for the system \eqref{eq:gen-syst}, which is related with the
Cauchy--Riemann operator. Recall, that the Cauchy--Riemann operator is the
differential operator
$$
\overline\partial=\frac{\partial}{\partial\overline{z}}=
\frac12\big(\partial_x+i\partial_y\big)
$$
Together with $\overline\partial$ we will use the operator
$$
\partial=\frac{\partial}{\partial z}=\frac12\big(\partial_x-i\partial_y\big)
$$
Let us recall that the kernel of the operator $\overline\partial$ in some
domain $D\subset\mathbb C$ is the space of all holomorphic functions in $D$,
while the kernel of $\partial$ in $D$ is the space of all antiholomorphic
functions in $D$ (of course, both kernels are considered in the space of
continuous functions in $D$).

Let $\mathcal L$ be the operator of the form \eqref{eq:gen-oper} with the
canonical parameters $\kappa\in(0,1]$ and $\lambda\in[-\kappa,\kappa]$,
$\lambda\neq0,\kappa^2$. We put
$$
\tau=\frac{1-\kappa}{1+\kappa},\qquad \sigma=\frac{\kappa-\lambda}{\kappa+\lambda}.
$$
Let $\lambda>0$ (this case corresponds to the case where the system
\eqref{eq:gen-syst} given by $\mathcal L$ is strong elliptic). Then
$|\sigma|<1$. Define the operator
\begin{equation}\label{eq:se-oper}
\mathcal L_{\tau,\sigma}=(\partial\overline\partial+\tau\partial^2)f+
\sigma(\tau\partial\overline\partial+\partial^2)\overline{f}=0.
\end{equation}
Let now $\lambda<0$, that is the system~\eqref{eq:gen-syst} set by $\mathcal
L$ is not strongly elliptic. In such a case $|\sigma|>1$. For
$\lambda=-\kappa$ we put $\sigma=\infty$. Setting $s=1/\sigma$ we define
$\mathcal L_{\tau,\sigma}$ in this case as follows
\begin{equation}\label{eq:we-oper}
\mathcal L_{\tau,\sigma}= (\overline\partial{}^2+\tau\partial\overline\partial)f+
s(\tau\overline\partial{}^2+\partial\overline\partial)\overline{f}=0.
\end{equation}
Notice that $\mathcal L_{\tau,\sigma}$ for $|\sigma|<1$ may be regarded as a
perturbation of the Laplace operator by a pair of `small' parameters $\tau$
and $\sigma$, while $\mathcal L_{\tau,\sigma}$ for $|\sigma|>1$ may regarded
as a perturbation of the Bitsadze operator $\overline\partial^2$ by small
parameters $\tau$ and $s=1/\sigma$.

\begin{proposition}
Let $\mathcal L$ be the strongly elliptic operator of the form
\eqref{eq:gen-oper}. Then it can be reduced by admissible transformations to
the form $\mathcal L_{\tau,\sigma}$ with $|\sigma|<1$. In the case where
$\mathcal L$ is not strongly elliptic, it can be reduced by admissible
transformations to the form $\mathcal L_{\tau,\sigma}$ with $|\sigma|>1$.

In particular, every strongly elliptic operator of the form
\eqref{eq:gen-oper} may be reduced to the form $\mathcal L_{\tau,0}$, while
every operator of the form \eqref{eq:gen-oper}, which is not strongly
elliptic, may be reduced to the form $\mathcal L_{\tau,\infty}$.
\end{proposition}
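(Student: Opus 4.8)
The plan is not to exhibit the reducing transformation directly, but to recognise $\mathcal L_{\tau,\sigma}$ as an operator of the form \eqref{eq:gen-oper} and to compute \emph{its own} canonical parameters, comparing them with those of $\mathcal L$. By Proposition~\ref{thm:canon-1} the operator $\mathcal L$ is reduced by admissible transformations to $\mathcal L^{1}_{\kappa,\lambda}$ with matrices \eqref{canon-matr-1}; since admissible transformations are invertible, equivalence of systems is a genuine equivalence relation, so it is enough to show that $\mathcal L_{\tau,\sigma}$, with $\tau=(1-\kappa)/(1+\kappa)$ and $\sigma=(\kappa-\lambda)/(\kappa+\lambda)$, is reduced to the \emph{same} operator $\mathcal L^{1}_{\kappa,\lambda}$, i.e. that its canonical parameters are again $\kappa$ and $\lambda$. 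Transitivity then yields $\mathcal L\sim\mathcal L_{\tau,\sigma}$.

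First I would rewrite $\mathcal L_{\tau,\sigma}$ in the form \eqref{eq:gen-oper}. Expanding $\partial^2$, $\partial\overline\partial$, $\overline\partial^2$ through $\partial_x=\partial+\overline\partial$, $\partial_y=i(\partial-\overline\partial)$ and then translating each complex action $g\mapsto\mu g+\nu\overline g$ into the corresponding real $2\times2$ matrix (via $m_{11}=\operatorname{Re}\mu+\operatorname{Re}\nu$, $m_{22}=\operatorname{Re}\mu-\operatorname{Re}\nu$, $m_{21}=\operatorname{Im}\mu+\operatorname{Im}\nu$, $m_{12}=\operatorname{Im}\nu-\operatorname{Im}\mu$), I obtain for the strongly elliptic form \eqref{eq:se-oper} the diagonal matrices $A=\tfrac{1+\tau}{4}\operatorname{diag}(1+\sigma,\,1-\sigma)$ and $C=\tfrac{1-\tau}{4}\operatorname{diag}(1-\sigma,\,1+\sigma)$ together with an anti-diagonal $B$. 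In particular $\mathcal L_{\tau,\sigma}$ really is a system of the form \eqref{eq:gen-oper}, and it is elliptic and non-reducible under the stated restrictions on $\tau,\sigma$.

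Next I would run the reduction scheme of Proposition~\ref{thm:canon-1} on these matrices. A direct computation of the characteristic form gives, up to a positive factor, $(\xi^2+\eta^2)\big((1+\tau)^2\xi^2+(1-\tau)^2\eta^2\big)$, with roots $\pm i$ and $\pm i(1-\tau)/(1+\tau)$; hence the first canonical parameter equals $(1-\tau)/(1+\tau)$, which is exactly $\kappa$ by the definition of $\tau$. Diagonalising $A$ by the third-kind transformation $A^{-1}$ reduces $C$ to $A^{-1}C=\kappa\operatorname{diag}\big(\tfrac{1-\sigma}{1+\sigma},\,\tfrac{1+\sigma}{1-\sigma}\big)$, whose two eigenvalues have product $\kappa^2$; taking the one of smaller modulus as the second canonical parameter and simplifying yields $\lambda=\kappa(1-\sigma)/(1+\sigma)$, which is precisely the relation $\sigma=(\kappa-\lambda)/(\kappa+\lambda)$. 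Thus $\mathcal L_{\tau,\sigma}$ has canonical parameters $(\kappa,\lambda)$, as required.

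Finally, since $\kappa>0$, the sign of $\lambda=\kappa(1-\sigma)/(1+\sigma)$ is positive exactly when $|\sigma|<1$, so by the last assertion of Proposition~\ref{thm:canon-1} the strongly elliptic case corresponds to $|\sigma|<1$; the non-strongly-elliptic case, where $|\sigma|>1$, is treated by repeating the computation for the form \eqref{eq:we-oper}, a perturbation of the Bitsadze operator $\overline\partial^2$, for which the analogous reduction gives $A^{-1}C=-\kappa\operatorname{diag}\big(\tfrac{1-s}{1+s},\,\tfrac{1+s}{1-s}\big)$ with two negative eigenvalues, hence $\lambda<0$ and $\sigma=1/s$, in agreement with $s=1/\sigma$. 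The particular cases $\sigma=0$ (so $\lambda=\kappa$) and $\sigma=\infty$ (so $\lambda=-\kappa$) are the skew-symmetric representatives, in which the coefficient of $\overline f$ disappears and the operators become $\mathcal L_{\tau,0}$ and $\mathcal L_{\tau,\infty}$. I expect the only real difficulty to be the bookkeeping in passing between the $\partial,\overline\partial$-form and the real matrices, and the correct selection of the smaller-modulus eigenvalue (with the attendant sign of $\sigma$) in each of the two cases.
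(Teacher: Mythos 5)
Your proposal is correct --- I checked the key computations (the real matrices of $\mathcal L_{\tau,\sigma}$ agree with \eqref{eq:x1} up to the harmless factor $\tfrac14$; the characteristic form is $(1-\sigma^2)(\xi^2+\eta^2)\bigl((1+\tau)^2\xi^2+(1-\tau)^2\eta^2\bigr)$; and $A^{-1}C$ has eigenvalues $\kappa\tfrac{1-\sigma}{1+\sigma}$ and $\kappa\tfrac{1+\sigma}{1-\sigma}$) --- but it runs in the opposite direction to the paper's argument. The paper proceeds \emph{forward}: it continues the explicit matrix reduction begun in Proposition~\ref{thm:canon-1}, applying to the matrices \eqref{eq:matr-a} two further explicit transformations (left multiplications and conjugations by concrete diagonal matrices) to reach matrices $A_7$, $B_7$, $C_7$, then adds the first equation to $i$ times the second to produce the single complex equation \eqref{eq:compl-canon-1}, which a direct computation identifies with $\mathcal L_{\tau,\sigma}f=0$. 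You instead work \emph{backward}: you compute the canonical invariants of $\mathcal L_{\tau,\sigma}$ itself, conclude that it reduces to the same canonical operator $\mathcal L^1_{\kappa,\lambda}$ as $\mathcal L$, and invoke invertibility and transitivity of admissible transformations. Your route buys freedom from the paper's matrix bookkeeping, and the dichotomy $\lambda>0\Leftrightarrow|\sigma|<1$, $\lambda<0\Leftrightarrow|\sigma|>1$ falls out of the sign of the eigenvalues of $A^{-1}C$; the paper's route buys the intermediate complex form \eqref{eq:compl-canon-1}, valid for every non-reducible elliptic system, which is of independent interest and is what actually makes the statement ``checkable by direct computation.''

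Two points you should make explicit to close the argument. First, to run the scheme of Proposition~\ref{thm:canon-1} on $\mathcal L_{\tau,\sigma}$ you need that operator to be non-reducible, and this is \emph{not} true for all $|\sigma|<1$: after diagonalization the off-diagonal entries of $B$ are proportional to $\tau-\sigma$ and $\tau+\sigma$, so the system is reducible exactly when $\sigma=\tau$ (given $\sigma,\tau\geqslant0$). In your setting this is automatic, since $\lambda\neq\kappa^2$ for a non-reducible $\mathcal L$ forces $\sigma\neq\tau$, but the assertion ``non-reducible under the stated restrictions'' needs this one line. Second, when $\sigma=0$ (respectively $s=0$) the matrix $A^{-1}C$ equals $\kappa I$ (respectively $-\kappa I$), a double eigenvalue, so one lands in case \eqref{eq:matr-1b} of Proposition~\ref{thm:canon-1} rather than \eqref{eq:matr-1a}; the conclusion $\lambda=\pm\kappa$ is unchanged, but the eigenvalue-selection step should be phrased to cover this degenerate case. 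Finally, note that the closing ``in particular'' clause of the Proposition is treated by you at exactly the same level as by the paper --- as the identification of the cases $\sigma=0$ and $\sigma=\infty$ with $\lambda=\kappa$ and $\lambda=-\kappa$ --- so no gap opens relative to the paper's own proof there.
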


\begin{proof}
First of all we observe that every non-reducible elliptic system
\eqref{eq:gen-syst} given by $\mathcal L$ of the form \eqref{eq:gen-oper},
may be rewritten in the form of a single equation to the function $f=u+iv$:
\begin{multline}\label{eq:compl-canon-1}
(1-\kappa)(\kappa+\lambda)\partial^2 f(z)+(1+\kappa)(\kappa+\lambda)\partial\overline\partial f(z)+\\
+(1+\kappa)(\kappa-\lambda)\partial^2\overline{f(z)}+(1-\kappa)(\kappa-\lambda)\partial\overline\partial\,\overline{f(z)}=0,
\end{multline}
where $\kappa$ and $\lambda$ are defined for $\mathcal L$ in
Proposition~\ref{thm:canon-1}. Indeed, let us continue transformation of
matrices $A$, $B$ and $C$ from $\mathcal L$, which was begun in the proof of
Proposition~\ref{thm:canon-1}.

At the \emph{first step} we multiply $A_4$, $B_4$ and $C_4$ from
\eqref{eq:matr-a} by the matrix
$\begin{pmatrix}1&0\\0&\lambda/\kappa^2\end{pmatrix}$ on the left to obtain
new matrices
$$
A_5=\begin{pmatrix}1&0\\0&\lambda/\kappa^2\end{pmatrix},\qquad
B_5=\begin{pmatrix}0&1\\(\lambda-1)(\lambda-\kappa^2)/(4\kappa^2)&0\end{pmatrix},\qquad
C_5=\begin{pmatrix}\lambda&0\\0&1\end{pmatrix}.
$$

At the \emph{second step} we use the matrix
$L=\begin{pmatrix}2\kappa/(\lambda-\kappa^2)&0\\0&1\end{pmatrix}$ to pass
from $A_5$, $B_5$ and $C_5$ to $A_6=LA_5L^{-1}$, $B_6=L B_5 L^{-1}$ and
$C_6=L C_5 L^{-1}$. Next, multiplying $A_6$, $B_6$ and $C_6$ on the left to
$\begin{pmatrix}\kappa/(\kappa^2-\lambda)&0\\0&\kappa/(1-\lambda)\end{pmatrix}$,
we obtain the matrices
\begin{align*}
A_7=&\begin{pmatrix}\kappa/(\kappa^2-\lambda)&0\\0&\lambda/(\kappa(1-\lambda))\end{pmatrix},\\
B_7=&\begin{pmatrix}0&-1\\-1&0\end{pmatrix},\\
C_7=&\begin{pmatrix}\lambda\kappa/(\kappa^2-\lambda)&0\\0&\kappa/(1-\lambda)\end{pmatrix}.
\end{align*}

Finally, at the \emph{third step} it remains to pass from the system of
equations (on $u$ and $v$) given by $A_7$, $B_7$ and $C_7$, to a single
equation on the function $f=u+iv$. To do this we need to add first equation
of this system with the second one multiplied by the imaginary unit $i$, and
then replace derivatives by $x$ and $y$ with their representations via
$\overline\partial$ and $\partial$, while $u$, $v$ by their representations
via $f$ and $\overline f$.

The passage from the equation \eqref{eq:compl-canon-1} to the form $\mathcal
L_{\tau,\sigma}f=0$ can be checked by the direct computation.
\end{proof}

Notice that the operator $\mathcal L_{\tau,\sigma}$ is more convenient to
represent in the following form. Define the differential operator
$\partial_\tau=\overline\partial+\tau\partial$ and the linear operator
$\mathcal B_{\alpha,\beta}=\alpha\mathcal I+\beta\mathcal C$, where
$\alpha,\beta\in\mathbb R$, while $\mathcal I$ and $\mathcal C$ are the
identity operator and the complex conjugation operator, respectively. Then
$$
\mathcal L_{\tau,\sigma}=\left\{\begin{array}{ll}
\partial\kern1pt\mathcal B_{1,\sigma}\partial_{\tau}&\text{for}\ |\sigma|<1,\\[1em]
\overline\partial\kern1pt\mathcal B_{1,s}\partial_{\tau}&\text{for}\ |\sigma|>1,
\end{array}\right.
$$
where, as previously, $s=1/\sigma$. Moreover, the equation $\mathcal
L_{\tau,\sigma}f=0$ with $\sigma\neq\infty$ may be written as a system with
the operator $\mathcal L$ given by the matrices
\begin{align}
A=&(1+\tau)\begin{pmatrix}1+\sigma&0\\0&1-\sigma\end{pmatrix},\notag\\
B=&\begin{pmatrix}0&\tau-\sigma\\-(\tau+\sigma)&0\end{pmatrix}\label{eq:x1},\\
C=&(1-\tau)\begin{pmatrix}1-\sigma&0\\0&1+\sigma\end{pmatrix}\notag.
\end{align}

\section{Energy functional for the system \eqref{eq:gen-syst}}

For a function $f=u+iv$ in the complex variable $z=x+iy$ we put
\begin{equation}\label{eq:nabla}
\nabla f=\begin{pmatrix} u_x & v_x & u_y & v_y\end{pmatrix}^t.
\end{equation}
Moreover we will identify $f$ with the vector $(u,v)^t$, and it will be
convenient to use the notation $f_x=(u_x,v_x)^t$ and $f_y=(u_y,v_y)^t$. For
vectors $a,b\in\mathbb R^m$, $m\geqslant1$, the symbol $(a,b)$ means, as
usual, their scalar product (in an appropriate space $\mathbb R^m$). The
symbol $m_2(\cdot)$ stands for the two-dimensional Lebesgue measure (i.e.
area) in $\mathbb R^2$.

In this section we study the question on what conditions on the operator
$\mathcal L$ of the form \eqref{eq:gen-oper} ensure that the system
\eqref{eq:gen-syst} with this operator admits a non-negatively determined
energy functional of the form \eqref{eq:enfunc-0}. We start with several
auxiliary statements. Let $E\in M_4(\mathbb R)$ be a symmetric matrix, i.e.
$E=E^t$, and let
\begin{equation}\label{eq:matr-E}
E=\begin{pmatrix}K&L\\L^t&M\end{pmatrix},
\end{equation}
where $K,L,M\in M_2(\mathbb R)$ are such that $K=K^t$ and $M=M^t$.

\begin{lemma}\label{lem:enfunc}
Let $D$ be a Jordan domain in $\mathbb C$ with the boundary $\varGamma$, let
$h\in C(\varGamma)$, and $E\in M_4(\mathbb R)$ be a symmetric matrix. Then,
the system of Euler--Lagrange equations to the functional
\begin{equation}\label{eq:enfunc}
\mathcal Ef:=\frac12\int_D (E\nabla{f},\nabla{f})\,dm_2,
\end{equation}
defined on the class of functions
\begin{equation}\label{eq:class}
\mathcal F(D,h)=\{f\in C^2(D)\cap C^1(\overline{D}) : f|_\varGamma=h\},
\end{equation}
has the form \eqref{eq:gen-syst} with the matrices $A=K$, $B=(L+L^t)/2$ and
$C=M$, where $K$, $L$ and $M$ are defined according to \eqref{eq:matr-E}.
\end{lemma}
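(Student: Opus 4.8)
The plan is to derive the Euler--Lagrange equations by computing the first variation of $\mathcal E$ and then invoking the fundamental lemma of the calculus of variations. First I would rewrite the integrand in block form. Writing $\nabla f=(f_x,f_y)^t$ as a block vector and using the decomposition \eqref{eq:matr-E} together with $K=K^t$ and $M=M^t$, one obtains
$$(E\nabla f,\nabla f)=(Kf_x,f_x)+2(f_x,Lf_y)+(Mf_y,f_y),$$
where the factor $2$ in the cross term appears because $f_y^tL^tf_x=f_x^tLf_y$ (both are scalars equal to their own transpose).

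Next I would introduce an admissible variation. Since every competitor in $\mathcal F(D,h)$ shares the boundary data $h$, the relevant test functions $\varphi$ satisfy $\varphi|_\varGamma=0$, and it suffices to take $\varphi\in C_c^\infty(D)$. As $\mathcal E$ is a quadratic form and $E$ is symmetric, the part of $\mathcal E(f+\varepsilon\varphi)$ that is linear in $\varepsilon$ is
$$\frac{d}{d\varepsilon}\Big|_{\varepsilon=0}\mathcal E(f+\varepsilon\varphi)=\int_D(E\nabla f,\nabla\varphi)\,dm_2=\int_D\big[(\varphi_x,Kf_x+Lf_y)+(\varphi_y,L^tf_x+Mf_y)\big]\,dm_2.$$
Setting $P=Kf_x+Lf_y$ and $Q=L^tf_x+Mf_y$, this equals $\int_D[(\varphi_x,P)+(\varphi_y,Q)]\,dm_2$.

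I would then integrate by parts in $x$ and $y$ respectively. Since $\varphi$ has compact support in $D$, no boundary terms arise, and because $f\in C^2(D)$ the fields $P$ and $Q$ are continuously differentiable inside $D$. This turns the first variation into $-\int_D(\varphi,\partial_xP+\partial_yQ)\,dm_2$. Requiring it to vanish for all $\varphi\in C_c^\infty(D)$ and applying the fundamental lemma forces $\partial_xP+\partial_yQ=0$ in $D$. As $K$, $L$, $M$ are constant, differentiating yields
$$Kf_{xx}+(L+L^t)f_{xy}+Mf_{yy}=0,$$
which is precisely \eqref{eq:gen-syst} with $A=K$, $2B=L+L^t$ and $C=M$, as claimed.

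The computation is routine, so there is no serious obstacle; the points deserving care are purely bookkeeping ones. One must use the symmetry $E=E^t$ to obtain the symmetric first variation $(E\nabla f,\nabla\varphi)$, and it is worth explaining why $B=(L+L^t)/2$ rather than $L$: the two integration-by-parts directions produce the terms $Lf_{xy}$ and $L^tf_{xy}$, which combine through $\partial_x\partial_yf=\partial_y\partial_xf$ into $(L+L^t)f_{xy}$, so only the symmetric part of $L$ survives in the operator. Finally, testing against $\varphi\in C_c^\infty(D)$ neatly sidesteps the fact that $f$ is assumed only $C^2$ in the open domain and merely $C^1$ up to $\varGamma$, so that the integration by parts requires no control of second derivatives at the boundary.
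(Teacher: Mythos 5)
Your proof is correct, and its computational core coincides with the paper's: expand $(E\nabla f,\nabla f)$ in the blocks $K$, $L$, $M$, compute the first variation, integrate by parts, and observe that only the symmetric part of $L$ survives because $f_{xy}=f_{yx}$. Where you genuinely differ is in the disposal of boundary terms. The paper works with arbitrary admissible variations $\delta f$ vanishing on $\varGamma$, splits the first variation into a divergence term plus the Euler--Lagrange term, and kills the divergence term by Green's theorem, obtaining a line integral over $\varGamma$ that vanishes since $\delta f|_\varGamma=0$. You instead test only against $\varphi\in C_c^\infty(D)$, so boundary terms never appear, and you conclude with the fundamental lemma of the calculus of variations. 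Your route is the more robust of the two: for $f\in\mathcal F(D,h)$ the second derivatives are controlled only inside $D$ and not up to $\varGamma$, and $\varGamma$ is merely a Jordan curve, hence possibly non-rectifiable, so the paper's line integral over $\varGamma$ and its appeal to Green's theorem require additional justification (for instance exhaustion of $D$ by smooth subdomains), whereas compactly supported variations make the integration by parts unconditional. Since stationarity with respect to all admissible variations implies stationarity with respect to the subclass $C_c^\infty(D)$, nothing is lost, and the necessity of the system with $A=K$, $B=(L+L^t)/2$, $C=M$ follows exactly as you state.
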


\begin{proof}
Write the functional under consideration in terms of matrices $K$, $L$ and
$M$:
$$
\mathcal Ef=\frac12\int_D\big((Kf_x,f_x)+2(Lf_x,f_y)+(Mf_y,f_y)\big)\,dm_2.
$$
A variation of this function may be computed directly:
$$
\delta\mathcal Ef= \int_D\big((Kf_x,\delta f_x)+(L^tf_y,\delta f_x)+(Lf_x,\delta f_y)+(Mf_y,\delta f_y)\big)\,dm_2,
$$
where $\delta f_x=(\delta u_x,\delta v_x)^t$ and $\delta f_y=(\delta
u_y,\delta v_y)^t$ are variations of $f_x$ and $f_y$, respectively. The
latter expression can be reduced to the form
\begin{align*}
\delta\mathcal Ef=&
\int_D\big[\partial_x\big((Kf_x,\delta f)+(L^tf_y,\delta f)\big)+
\partial_y\big((Lf_x,\delta f)+(Mf_y,\delta f)\big)\big]\,dm_2\\
&-\int_D\big[(Kf_{xx},\delta f)+((L+L^t)f_{xy},\delta f)+(Mf_{yy},\delta f)\big]\,dm_2,
\end{align*}
where $\delta f=(\delta u,\delta v)$ is the variation of $f$, and
$\partial_x$ and $\partial_y$ are operator of partial differentiation by $x$
and $y$, respectively. The integral in this expression is a divergence of
some vector and, hence, it is equal to the following inegral
$$
\int_\varGamma\big((Kf_x,\delta f)+(L^tf_y,\delta f)\big)\,dy -\big((Lf_x,\delta f)+(Mf_y,\delta f)\big)\,dx,
$$
which vanishes because the variations of $u$ and $v$ on $\varGamma$ equal
zero (since these functions take on $\varGamma$ the prescribed values). The
equality to zero of the second integral from the expression for
$\delta\mathcal Ef$ leads to the system of the form \eqref{eq:gen-syst}
generated by the matrices $A=K$, $B=(L+L^t)/2$ and $C=M$.
\end{proof}

The next lemma may be proved by direct differentiation.
\begin{lemma}\label{lem:enfunc-trans}
Let $D$ be a Jordan domain with the boundary $\varGamma$ and let $h\in
C(\varGamma)$ be a given function. Moreover, let $E\in M_4(\mathbb R)$ be a
symmetric matrix and a functional $\mathcal E$ on the set $\mathcal F(D,h)$
is defined by \eqref{eq:enfunc}. Then the following properties take place.
\begin{enumerate}
\item[1)]
Let $z\mapsto\zeta=\xi+i\eta$ be the linear non-degenerate change of
variables in $\mathbb R^2$, defined by the matrix $T\in M_2(\mathbb R)$.
Then
$$
\mathcal Ef= \frac{1}{2}\int_{D_0}(E_0\nabla_\zeta f,\nabla_\zeta f)\,dm_2(\zeta),
$$
where $D_0$ is the image of $D$ under the mapping $z\mapsto\zeta$, while
$\nabla_\zeta f=(u_\xi,v_\xi,u_\eta,v_\eta)^t$, and where components
$K_0$, $L_0$ and $M_0$ of $E_0$ are determined from the equality
\begin{equation}\label{eq:vtrans}
\begin{pmatrix}K_0&L_0\\L_0^t&M_0\end{pmatrix}
=T\begin{pmatrix}K&L\\L^t&M\end{pmatrix}T^t.
\end{equation}
\item[2)]
Let $f=u+iv$ and $f_1=u_1+iv_1$ are connected by some non-degenerate
linear transformation $f_1=Qf$, defined by $Q\in M_2(\mathbb R)$. Then
$$
\mathcal Ef=\frac12\int_{D}(E_1\nabla f_1,\nabla f_1)\,dm_2,
$$
where $E_1\in M_4(\mathbb R)$ is such that their components $K_1$, $L_1$
and $M_1$ of $E_1$ are determined from the equality
\begin{equation}\label{eq:ftrans}
\begin{pmatrix}K_1&L_1\\L_1^t&M_1\end{pmatrix}=
Q^t\begin{pmatrix}K&L\\L^t&M\end{pmatrix}Q.
\end{equation}
\end{enumerate}
In the case where $E$ is non-negatively \textup(resp., positively\textup)
determined, that both matrices $E_0$ and $E_1$ also are non-negatively
\textup(resp., positively\textup) determined.
\end{lemma}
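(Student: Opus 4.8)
The plan is to observe that both admissible transformations in question act on the integrand of \eqref{eq:enfunc} purely through a linear, pointwise, invertible substitution of the gradient vector $\nabla f\in\mathbb R^4$, so that the Gram matrix $E$ is changed by a \emph{congruence}; the concluding definiteness assertion then follows because congruence by a nonsingular matrix preserves the sign of a quadratic form. The whole lemma is therefore a chain-rule computation, exactly as its phrasing suggests, and the only genuine work is to keep the $4\times4$ block bookkeeping consistent with the stated formulas \eqref{eq:vtrans} and \eqref{eq:ftrans}.

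For statement (1) I would start from $\zeta=Tz$ and apply the chain rule to each of $u_x,v_x,u_y,v_y$: since $\partial_x=t_{11}\partial_\xi+t_{21}\partial_\eta$ and $\partial_y=t_{12}\partial_\xi+t_{22}\partial_\eta$, the entries of $\nabla f$ are obtained from those of $\nabla_\zeta f$ by the matrix
$$
S=\begin{pmatrix}t_{11}I&t_{21}I\\ t_{12}I&t_{22}I\end{pmatrix},
$$
whose entries are those of $T$ arranged to act on the differentiation (i.e. $(x,y)$) index while leaving the $(u,v)$ index untouched, so that $\nabla f=S\,\nabla_\zeta f$. Substituting into $(E\nabla f,\nabla f)$ turns the integrand into $(S^tES\,\nabla_\zeta f,\nabla_\zeta f)$, and the area element transforms by the positive Jacobian factor $|\det T|^{-1}$, which I would absorb into the normalization since it affects neither non-negativity nor positivity. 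A direct block multiplication then confirms that $S^tES$ has precisely the blocks $K_0,L_0,M_0$ prescribed by \eqref{eq:vtrans}.

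For statement (2) the computation is shorter: because $Q$ is constant, differentiation commutes with it, so $f_1=Qf$ gives $(f_1)_x=Qf_x$ and $(f_1)_y=Qf_y$, whence $\nabla f=R\,\nabla f_1$ with the block-diagonal matrix $R=\mathrm{diag}(Q^{-1},Q^{-1})$ acting on the $(u,v)$ index alone. Substituting into the quadratic form yields $E_1=R^tER$, and a block multiplication reproduces \eqref{eq:ftrans} (the placement of $Q$ versus $Q^{-1}$ being only a matter of which of $f$, $f_1$ is regarded as primary, and being irrelevant to what follows).

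Finally, for the definiteness claim I would note that $S$ and $R$ are invertible, since $\det S=(\det T)^2\neq0$ and $\det R=(\det Q)^{-2}\neq0$. Hence for every $w\in\mathbb R^4$ one has $(E_0w,w)=|\det T|^{-1}(E(Sw),Sw)$ and $(E_1w,w)=(E(Rw),Rw)$, and as $w$ runs over $\mathbb R^4\setminus\{0\}$ so do $Sw$ and $Rw$; therefore the inequalities $(Ev,v)\geqslant0$ (resp. $(Ev,v)>0$ for $v\neq0$) transfer verbatim to $E_0$ and $E_1$. The main — and essentially the only — obstacle is thus bookkeeping: fixing the correct $4\times4$ congruence matrices, their transposes, and the Jacobian in a way consistent with \eqref{eq:vtrans}--\eqref{eq:ftrans}. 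There is no analytic difficulty, the positivity statement being an immediate consequence of nonsingular congruence.
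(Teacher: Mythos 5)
Your proposal is correct and takes essentially the same route as the paper, which offers no details beyond the remark that the lemma ``may be proved by direct differentiation'': you carry out exactly that chain-rule substitution, identify the congruence matrices $S=T^t\otimes I$ and $R=\mathrm{diag}(Q^{-1},Q^{-1})$, and deduce the definiteness claim from nonsingular congruence. Your two bookkeeping observations --- that the stated equality for $E_0$ holds only up to the positive Jacobian factor $|\det T|^{-1}$, and that \eqref{eq:ftrans} implicitly takes $f=Qf_1$ rather than $f_1=Qf$ --- are accurate readings of harmless imprecisions in the paper's formulation and do not affect how the lemma is used.
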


The main result of this paper is the following statement:
\begin{theorem}\label{thm:main}
A non-reducible elliptic system of the form \eqref{eq:gen-syst} is the system
of Euler--Lagrange equations for some functional of the form
\eqref{eq:enfunc} with non-negatively determined matrix $E\in M_4(\mathbb R)$
if and only if this system is strongly elliptic and its canonical parameters
$\tau$ and $\sigma$ are such that $\sigma>\tau$.
\end{theorem}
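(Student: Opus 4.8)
The plan is to translate the existence of a non-negatively determined energy functional into a linear-algebra question about the canonical matrices \eqref{eq:x1} and to settle that question by a sign analysis together with one Schur-complement estimate. By Lemma~\ref{lem:enfunc} the functional \eqref{eq:enfunc} attached to a symmetric $E=\left(\begin{smallmatrix}K&L\\L^t&M\end{smallmatrix}\right)$ has an Euler--Lagrange system with matrices $(K,(L+L^t)/2,M)$; only the symmetric part of $L$ enters here, so the antisymmetric part $L_-=(L-L^t)/2$ is a free null-Lagrangian term that influences $E$ but not the equations. Since left multiplication of the system by an invertible $P$ (a third-kind transformation) does not change its solution space, the system \eqref{eq:gen-syst} is an Euler--Lagrange system of a non-negatively determined functional if and only if there exist an invertible $P\in M_2(\mathbb R)$ and an antisymmetric $L_-$ such that $PA$, $PB$, $PC$ are symmetric and $E=\left(\begin{smallmatrix}PA & PB+L_-\\ PB-L_- & PC\end{smallmatrix}\right)\geq0$. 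Assembling Lemmas~\ref{lem:transform-1}, \ref{lem:enfunc} and \ref{lem:enfunc-trans}, I would first verify that this property is invariant under all admissible transformations, so that it suffices to test it on the canonical operator with matrices \eqref{eq:x1}, where $\tau\in[0,1)$ and, by Proposition~\ref{thm:canon-1}, strong ellipticity is equivalent to $|\sigma|<1$; in that range $\sigma\in[0,1)$.

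The decisive simplification comes from the rigid shape of \eqref{eq:x1}: $A$ and $C$ are diagonal with entries proportional to $1\pm\sigma$, and a direct computation shows that for $\sigma\neq0$ the requirement that $PA$ and $PC$ be simultaneously symmetric forces $P$ to be diagonal, $P=\mathrm{diag}(p_1,p_2)$. Symmetry of $PB$ then reduces to the single relation $p_1(\sigma-\tau)=p_2(\sigma+\tau)$, while non-negativity of the principal blocks $K=PA$ and $M=PC$ of $E$ forces $p_1,p_2>0$ (here I use $1\pm\tau>0$ and, in the strongly elliptic range, $1\pm\sigma>0$). With $p_1,p_2>0$ and $\sigma,\tau\in[0,1)$ that relation is solvable exactly when $\sigma>\tau$, which gives necessity in the strongly elliptic case. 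The degenerate value $\sigma=0$ is treated directly: there the symmetry constraints force $P$ symmetric with $\operatorname{tr}P=0$, hence $\det P<0$, so $PA$ is indefinite and $E\not\geq0$. If the system is not strongly elliptic, then $\sigma>1$, the diagonal matrices $A$ and $C$ are indefinite, and the sign conditions on $PA$ and $PC$ are incompatible unless $p_1=p_2=0$; thus no invertible $P$ exists and no functional can be built.

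For sufficiency I would assume $|\sigma|<1$ and $\sigma>\tau$ and take the explicit choice $P=\mathrm{diag}(\sigma+\tau,\sigma-\tau)$. It is invertible, makes $K=PA$ and $M=PC$ positive definite diagonal matrices, and yields the symmetric $PB=(\tau^2-\sigma^2)\left(\begin{smallmatrix}0&1\\1&0\end{smallmatrix}\right)$. It then remains to select the free term $L_-=\ell\left(\begin{smallmatrix}0&1\\-1&0\end{smallmatrix}\right)$ so that $E\geq0$. As $K>0$, the Schur-complement criterion reduces this to $M-L^tK^{-1}L\geq0$; after diagonalisation this becomes two scalar inequalities on the off-diagonal entries $a,b$ of $L$, subject to the fixed sum $a+b=2(\tau^2-\sigma^2)$ that $\ell$ leaves invariant. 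A short estimate shows that these are simultaneously solvable precisely when $\sigma^2-\tau^2\leq1-\tau^2$, i.e. when $|\sigma|\leq1$, which holds strictly in the strongly elliptic range. Hence a non-negatively determined $E$ exists, and transporting it back through the admissible transformations produces the required functional for the original system.

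The main obstacle is the necessity direction, because it must exclude every invertible $P$ and every null-Lagrangian term $L_-$ at once rather than exhibit a single good choice. What renders this tractable is the observation in the second paragraph that simultaneous symmetrizability of $PA$ and $PC$ collapses the search to diagonal $P$ as soon as $\sigma\neq0$; after that the whole question becomes one of signs plus a single Schur-complement estimate. The points demanding genuine care are therefore the exceptional values $\sigma=0$ and $\sigma=\tau$ (where the diagonalisation argument, respectively the invertibility of $P$, breaks down) and the initial claim that existence of the functional is invariant under admissible transformations, which has to be assembled carefully from Lemmas~\ref{lem:transform-1}, \ref{lem:enfunc} and \ref{lem:enfunc-trans}.
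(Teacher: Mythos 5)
Your proposal is correct and follows essentially the same route as the paper: reduce to the canonical matrices \eqref{eq:x1}, symmetrize by a left factor $P$, read off necessity from non-negativity of the diagonal blocks, and prove sufficiency by a Schur-complement/discriminant estimate that comes down to $\sigma^2\leqslant 1$ (the paper's explicit matrix $L$ is exactly your choice $\ell=\sigma(\tau^2-\sigma^2)$). If anything, your necessity step is more complete than the paper's: the paper simply multiplies by $\mathrm{diag}(\sigma+\tau,\sigma-\tau)$ and applies Sylvester's criterion, tacitly assuming this is the relevant symmetrizer, whereas you verify that for $\sigma\neq0$ every admissible $P$ is diagonal and proportional to this one, and you treat the exceptional cases $\sigma=0$ and $|\sigma|>1$ explicitly where the paper leaves them implicit.
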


\begin{proof}
Suppose that the system \eqref{eq:gen-syst} is the system of Euler--Lagrange
equations for some functional \eqref{eq:enfunc}. Then, according to
Lemma~\ref{lem:enfunc}, this system has symmetric matrices, or it can be
reduced to the system with symmetric matrices by suitable linear combinations
of equations. The system with symmetric matrices we reduce to the system
generated by matrices of the form \eqref{eq:x1}. It can be done using linear
combinations of equations which do not change the energy functional and by
linear changes of variables and sought-for functions. The latter
transformation, according to Lemma~\ref{lem:enfunc-trans}, preserves the
property of energy functional to be non-negatively determined. Now we
multiply all matrices of the obtained system on the left by the matrix
$$
\begin{pmatrix}\sigma+\tau & 0 \\ 0 & \sigma-\tau\end{pmatrix}
$$
and arrive at the system with symmetric matrices
\begin{equation}\label{eq:matr-sym}
\begin{aligned}
A=&(1+\tau)\left(\begin{matrix}(1+\sigma)(\sigma+\tau) & 0 \\ 0 & (1-\sigma)(\sigma-\tau)\end{matrix}\right),\\
B=&(\tau^2-\sigma^2)\left(\begin{matrix}0 & 1 \\ 1 & 0\end{matrix}\right),\\
C=&(1-\tau)\left(\begin{matrix}(1-\sigma)(\sigma+\tau) & 0 \\ 0 & (1+\sigma)(\sigma-\tau)\end{matrix}\right).
\end{aligned}
\end{equation}
The system generated by $A$, $B$ and $C$ is also the system of
Euler--Lagrange equations for the functional of the form \eqref{eq:enfunc}
with non-negatively determined matrix $E$ of the form \eqref{eq:matr-E}.
Moreover, according to Lemma~\ref{lem:enfunc} we have $K=A$, $L+L^t=2B$ and
$M=C$ for components of $E$. Using the Silvester criterion we conclude, that
if $E$ is non-negatively determined, then $K$ needs to be non-negatively
determined, which yields $\sigma>\tau$ (the case $\sigma=\tau$ is already
excluded since the initial system is non-reducible).

In order to prove sufficiency of our conditions let us present, for
$0\leqslant\tau<\sigma<1$, some non-negatively determined matrix $E$ of the
form \eqref{eq:matr-E} constructed by $K=A$, $M=C$ and
$$
L=\left(\begin{matrix} 0 & (1+\sigma)(\tau^2-\sigma^2) \\ (1-\sigma)(\tau^2-\sigma^2) & 0\end{matrix}\right),
$$
so that $L$ possesses the condition $L+L^t=2B$, where $A$, $B$ and $C$ are
taken from \eqref{eq:matr-sym}.
\end{proof}

In particular, it follows from Theorem~\ref{thm:main} that for systems
determined by operators $\mathcal L_{\tau,0}$ for $\tau>0$ (that is for
equations of the form \eqref{eq:gen-compl-equation} different from the
Laplace equation, which corresponds to a reducible system) do not exist
non-negatively determined energy functional of the form \eqref{eq:enfunc}.
This circumstance shows that it is not possible to generalize directly the
Lebesgue theorem stated above to strongly elliptic equations of the form
\eqref{eq:gen-compl-equation}. Thus, for proving an analogue of the Lebesgue
theorem for such equations (and, in particular, for solving Problem~4.2 from
\cite{pf1999}) some essentially different techniques and ideas are needed.

\bigskip

\end{document}